\newcommand{\rack}{\vartriangleright}
\newcommand{\frack}{f_{\mathrm{rack}}}
\newcommand{\fquand}{f_{\mathrm{quandle}}}
\newcommand{\fkei}{f_{\mathrm{kei}}}
\newcommand{\im}{\mathrm{im}}
\newcommand{\Sym}{\mathrm{Sym}}
\theoremstyle{plain}
\newtheorem{theorem}{Theorem}
\newtheorem{lemma}[theorem]{Lemma}
\newtheorem{corollary}[theorem]{Corollary}
\theoremstyle{definition}
\newtheorem*{definition}{Definition}
\begin{document}

\title{Enumerating finite racks, quandles and kei}
\author{Simon R. Blackburn\\
Department of Mathematics\\
Royal Holloway, University of London\\
Egham, Surrey TW20 0EX, United Kingdom}
\maketitle

\begin{abstract}
  A rack of order $n$ is a binary operation $\rack$ on a set $X$ of
  cardinality~$n$, such that right multiplication is an
  automorphism. More precisely, $(X,\rack)$ is a rack provided that
  the map $x\mapsto x\rack y$ is a bijection for all $y\in X$, and
  $(x\rack y)\rack z=(x\rack z)\rack (y\rack z)$ for all $x,y,z\in
  X$.

  The paper provides upper and lower bounds of the form $2^{cn^2}$ on
  the number of isomorphism classes of racks of order $n$. Similar
  results on the number of isomorphism classes of quandles and kei are
  obtained. The results of the paper are established by first showing
  how an arbitrary rack is related to its operator group (the
  permutation group on $X$ generated by the maps $x\mapsto x\rack y$
  for $y\in Y$), and then applying some of the theory of
  permutation groups. The relationship between a rack and its operator
  group extends results of Joyce and of Ryder; this relationship might
  be of independent interest.
\end{abstract}

\section{Introduction}
\label{sec:introduction}

We begin by defining the objects of interest to us.

\begin{definition}
A \emph{rack} is a set $X$ together with a binary operator $\rack:
X\times X\rightarrow X$ such that the following two conditions hold.
\begin{itemize}
\item[(i)] For all $y\in X$, the map $f_y:X\rightarrow X$ is a bijection, where we define $xf_y:=(x\rack y)$ for all $x\in X$.
\item[(ii)] For all $x,y,z\in X$, $(x\rack y)\rack z=(x\rack z)\rack (y\rack z)$.
\end{itemize}
\end{definition}

\begin{definition}
A \emph{quandle} is a rack such that $xf_x=x$ for all $x,y\in X$. A \emph{kei} is a quandle such that $f_y$ has order $2$ for all $y\in X$.
\end{definition}

\begin{definition} An isomorphism $\theta$ from a rack $(X,\rack)$ to a rack $(X',\rack')$ is a bijection from $X$ to $X'$ such that $x\theta\rack'y\theta=(x\rack y)\theta$ for all $x,y\in X$.
\end{definition}

Some illustrative examples of kei, quandles and racks are as follows.
If $X$ is a set and $\pi$ is permutation in $\Sym(X)$, the symmetric
group on $X$, then defining $x\rack y:=x\pi$ we have a rack
$(X,\rack)$. If $G$ is a group, then defining $X=G$ and $x\rack
y:=y^{-1}xy$, we find that $(X,\rack)$ is a quandle (a
\emph{conjugation quandle}); if instead we take $X$ to be the set of
all elements of order $2$ in $G$, then $(X,\rack)$ is a kei.

A rack can be motivated purely combinatorially, as a binary operation
where right multiplication is an automorphism. But another motivation
comes from knot theory: kei, quandles and racks have recently led to
the discovery of new invariants of classical knots, and new classes of
generalised knots; see the recent inspiring article by
Nelson~\cite{Nelson11} (which contains a nice exposition of how the
kei, quandle and rack axioms relate to Reidemeister moves in knot
diagrams, as well as mentioning connections with many other areas of
mathematics, and giving more examples of racks). Kei were first
studied by Takasaki~\cite{Takasaki43} in 1943; racks originated in
unpublished correspondence between J.H.\ Conway and G.C.\ Wraith in
1959 on conjugation in groups; the special case of a quandle was
studied in detail from the perspective of knot theory by
Joyce~\cite{Joyce82}. See Fenn and Rourke~\cite{FennRourke92} for a
brief history of these concepts.

The operator group of a rack is the subgroup of $\Sym(X)$ generated by
the permutations $f_y$ for $y\in X$ (see
Section~\ref{sec:rack_structure}). We will establish tight results
that show how a rack can be built from its operator group. These
results extend those of Joyce~\cite[Section~7]{Joyce82} and of
Ryder~\cite[Section~5]{Ryder93}. As an application of these structural
results we prove an enumeration theorem
(Theorem~\ref{thm:main_enumeration}) below, though we hope that the
results are of more general interest.

If $X$ is a finite set of order $n$, we say that a rack, quandle or
kei with underlying set $X$ has order $n$. We write $\frack(n)$,
$\fquand(n)$ and $\fkei(n)$ for the number of isomorphism classes of
racks, quandles and kei of order $n$ respectively. We aim to prove the
following theorem:

\begin{theorem}
\label{thm:main_enumeration}
There exist constants $c_1$ and $c_2$ such that
\[
2^{c_1n^2}\leq \fkei(n)\leq \fquand(n) \leq \frack(n)\leq 2^{c_2n^2}
\]
for all sufficiently large integers $n$.
\end{theorem}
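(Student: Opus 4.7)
The two middle inequalities are immediate: every kei is a quandle, every quandle is a rack, and a rack-isomorphism between two kei (or two quandles) is automatically an isomorphism of kei (respectively quandles). The real work is in the two outer bounds.

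\textbf{Upper bound.} My plan is to combine the rack--operator-group structure theorem established earlier in the paper with the classical estimate that $\Sym(X)$ has at most $2^{O(n^2)}$ subgroups. The structure theorem presents a rack on $X$ as the data of an operator group $G\le\Sym(X)$ together with an equivariant assignment $x\mapsto f_x\in G$ satisfying $f_{xg}=g^{-1}f_xg$ for all $g\in G$, which in turn forces $f_{x_i}\in C_G(G_{x_i})$ for any $G$-orbit representative $x_i$. So for fixed $G$ the number of admissible assignments is at most $\prod_i |C_G(G_{x_i})|$, with the product taken over $G$-orbits. The goal is to show that $\sum_G \prod_i |C_G(G_{x_i})|\le 2^{O(n^2)}$, using the Pyber bound on the number of subgroups of $\Sym(X)$ together with the identity $|C_G(G_x)|=|Z(G_x)|\cdot[N_G(G_x):G_x]$ and standard centralizer/normalizer bounds in permutation groups. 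A final factor of $n!=2^{o(n^2)}$, lost in passing from labeled racks to isomorphism classes, is harmless.

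\textbf{Lower bound.} My plan is an explicit construction. Partition $X$ into $n/2$ pairs $P_1,\dots,P_{n/2}$, let $t_i\in\Sym(X)$ be the transposition swapping $P_i$, and set $H:=\langle t_1,\dots,t_{n/2}\rangle\cong\mathbb{F}_2^{n/2}$. Each $H$-orbit is a pair $P_i$, and for any $x\in P_i$ the stabilizer $H_x=\langle t_j:j\ne i\rangle$ has order $2^{n/2-1}$. For each $i$ choose arbitrarily a nonidentity element $f^{(i)}\in H_x$, set $f_x:=f^{(i)}$ for all $x\in P_i$, and define $x\rack y:=xf_y$. The rack axiom reduces to $f_z^{-1}f_yf_z=f_{yf_z}$, which holds since $H$ is abelian and the assignment is constant on $H$-orbits; the quandle axiom $xf_x=x$ holds because $f_x\in H_x$; and the kei axiom holds since every nonidentity element of $H$ is an involution. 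This yields $(2^{n/2-1}-1)^{n/2}=2^{(1/4-o(1))n^2}$ labeled kei, and dividing by $n!=2^{o(n^2)}$ gives at least $2^{c_1 n^2}$ isomorphism classes for any $c_1<1/4$.

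The \textbf{main obstacle} lies in the upper-bound estimate on $\prod_i |C_G(G_{x_i})|$. For intransitive $G$ with large centralizers these individual factors can be as large as $|G|$, and a naive bound delivers only $|G|^{\mathrm{orb}(G)}\le(n!)^n=2^{O(n^2\log n)}$, losing a logarithmic factor. The key will be to combine the additional constraint that the $f_x$'s generate $G$ with sharper structural bounds on centralizers in permutation groups, so as to cut this down to $2^{O(n^2)}$; this is the delicate permutation-theoretic step. The lower-bound construction, by contrast, is routine once the correct abelian-involution design is in hand.
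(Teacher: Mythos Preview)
Your lower-bound construction is essentially identical to the paper's: partition $X$ into pairs, take $H\cong\mathbb{F}_2^{\lfloor n/2\rfloor}$ acting by the obvious transpositions, and for each orbit choose an element of the point stabiliser. The counting and the division by $n!$ match.

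For the upper bound, your reduction is also the paper's: Pyber's bound on subgroups of $\Sym(X)$, then for each fixed $G$ bound the number of admissible tuples $(\pi_i)$ by $\prod_i |C_G(G_{\alpha_i})|$. The gap is in how you propose to control this product. Your decomposition $|C_G(G_x)|\le |Z(G_x)|\cdot[N_G(G_x):G_x]$ (it is an inequality, not an identity) does not help: $|Z(G_x)|$ can be as large as $|G|/n_i$, and there is no usable general bound on $[N_G(G_x):G_x]$ that beats the trivial one. The generation constraint~\eqref{eqn:condition_b} is also a red herring for an \emph{upper} bound---the paper never invokes it in the count. What actually closes the argument is a different observation: since $C_G(\pi_i)\ge G_{\alpha_i}$, the conjugacy class of $\pi_i$ has size at most $[G:G_{\alpha_i}]=n_i$; combined with Mar\'oti's theorem that any permutation group of degree $n$ has at most $3^{(n-1)/2}$ conjugacy classes, this gives $|C_G(G_{\alpha_i})|\le n_i\cdot 3^{n/2}$, whence $\prod_i|C_G(G_{\alpha_i})|\le 3^{n^2/2}\cdot\prod_i n_i\le 3^{n^2/2+n}$. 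The conjugacy-class viewpoint, and Mar\'oti's bound in particular, is the missing ingredient; the tools you list will not get you past $2^{O(n^2\log n)}$.
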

Theorem~\ref{thm:main_enumeration} follows from
Theorems~\ref{thm:lower_bound} and~\ref{thm:upper_bound} below. The
proofs of these theorems show that we may take
$c_1=\frac{1}{4}-\epsilon$ for any positive $\epsilon$, and we may
take $c_2=c+\epsilon$ for any positive $\epsilon$, where $c=\frac{1}{6}(\log_2
24)+\frac{1}{2}(\log_2 3)\approx 1.5566$.

We remark that Theorem~\ref{thm:main_enumeration} shows that the
number of isomorphism classes of kei, quandles and racks grows much
faster than the number of isomorphism classes of groups of order $n$
(which Pyber~\cite{Pyber93} proved is at most $2^{O((\log n)^3)}$; see~\cite{BNV}). In
particular, most quandles are not isomorphic to conjugation
quandles.

The number of kei, quandles and racks grows significantly more slowly
than $n^{n^2}$, the number of binary operations on a set of
cardinality $n$. This contrasts with the situation for semigroups, for
example: Kleitman, Rothschild and Spencer~\cite{KleitmanRothschild76}
have shown that the number of semigroups of order $n$ is
$n^{(1-o(1))n^2}$.

We are not aware of any previous asymptotic enumeration results for
racks, quandles and kei, but there has been interest in enumerating
the quandles of small order. In particular, Ho and Nelson~\cite{HoNelson05}, and
Henderson, Macedo and Nelson~\cite{HendersonMacedo06} have
enumerated the isomorphism classes of quandles of order $8$ or less;
Vendramin~\cite{Vendramin12}, extending computations of
Clauwens~\cite{Clauwens10}, has enumerated the isomorphism classes of
quandles of order 35 or less whose operator group is transitive.

The structure of the remainder of the paper is as follows. In
Section~\ref{sec:rack_structure} we establish the structural results
that relate the structure of a rack with its operator
group. We prove a lower bound
(Theorem~\ref{thm:lower_bound}) on $\fkei(n)$ in
Section~\ref{sec:lower_bound}, and an upper bound
(Theorem~\ref{thm:upper_bound}) on $\frack(n)$ in
Section~\ref{sec:upper_bound}.

\section{The structure of a rack}
\label{sec:rack_structure}

In this section, we recap some terminology we need from the theory of
racks, and prove two structural theorems. These theorems are used to
prove Theorem~\ref{thm:lower_bound} in Section~\ref{sec:lower_bound},
and Theorem~\ref{thm:upper_bound} in
Section~\ref{sec:upper_bound}. Racks in this section need not be
finite.

\begin{definition}
  Let $(X,\rack)$ be a rack. The \emph{augmentation map} of $(X,\rack)$ is the
  map $f:X\rightarrow \Sym(X)$ defined by $(y)f=f_y$. The \emph{operator
    group} (or \emph{inner automorphism group}) of $X$ is the subgroup $G\leq\Sym(X)$ generated by the
  image of $f$. So
\[
G=\langle (y)f:y\in X\rangle.
\]
\end{definition}

Note that, in contrast to some of the literature, we define the
operator group as a permutation group on $X$, rather than as an
abstract group.  The following lemma is well-known; see the second
form of the rack identity in~\cite{FennRourke92}.

\begin{lemma}
\label{lem:augmentation}
Let $(X,\rack)$ be a rack with operator group $G$, and let
$f:X\rightarrow G$ be the augmentation map of $X$. Then
\[
(\alpha g)f = g^{-1}(\alpha f)g
\]
for all $g\in G$ and $\alpha\in X$.
\end{lemma}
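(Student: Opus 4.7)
The plan is to first establish the identity for the generators $f_z$ of $G$ directly from rack axiom (ii), and then extend it to arbitrary $g \in G$ by showing that the set of elements for which the identity holds forms a subgroup.

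For the generator case, I would rewrite rack axiom (ii) in terms of the maps $f_y$. The identity $(x\rack y)\rack z=(x\rack z)\rack(y\rack z)$ becomes $xf_yf_z = (xf_z)f_{yf_z}$ for all $x$, i.e.\ an equality of permutations $f_yf_z = f_zf_{yf_z}$, which rearranges to
\[
f_{yf_z} = f_z^{-1}f_yf_z.
\]
Translating this through the augmentation map, $(yf_z)f = f_z^{-1}(yf)f_z$, which is exactly the claimed identity for $g = f_z$ and arbitrary $\alpha = y \in X$.

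Next I would define
\[
H = \bigl\{\,g \in G : (\alpha g)f = g^{-1}(\alpha f)g \text{ for all } \alpha\in X\,\bigr\}
\]
and verify that $H$ is a subgroup. The identity element lies in $H$ trivially. Closure under products is a direct computation: if $g_1,g_2 \in H$, then
\[
(\alpha g_1g_2)f = \bigl((\alpha g_1)g_2\bigr)f = g_2^{-1}\bigl((\alpha g_1)f\bigr)g_2 = g_2^{-1}g_1^{-1}(\alpha f)g_1g_2 = (g_1g_2)^{-1}(\alpha f)(g_1g_2).
\]
For closure under inverses, given $g \in H$ and $\alpha \in X$, apply the defining relation with $\alpha$ replaced by $\alpha g^{-1}$ to obtain $(\alpha)f = g^{-1}\bigl((\alpha g^{-1})f\bigr)g$, then conjugate both sides by $g$ to deduce $(\alpha g^{-1})f = g(\alpha f)g^{-1}$, as required.

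By the first step, $H$ contains every generator $f_z$ of $G$, so $H = G$, and the lemma follows. The only real issue is bookkeeping the right-action conventions carefully (so that $(\alpha g)f$ means first act by $g$ on $\alpha$, then apply the augmentation), since the computation itself is entirely routine once the generator case is written down.
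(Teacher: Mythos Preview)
Your proof is correct and follows essentially the same approach as the paper: first derive the identity for the generators $f_z$ directly from rack axiom~(ii), then extend to all of $G$. The only cosmetic difference is that you package the extension as ``the set $H$ of good elements is a subgroup containing the generators,'' whereas the paper handles inverses of generators explicitly and then inducts on word length; these are equivalent formulations of the same argument.
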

\begin{proof}
The identity (ii) in the definition of a rack may be rewritten as
\[
x(yf)(zf)=x(zf)((y(zf))f)
\]
for all $x,y,z\in X$. Set $g=zf$ and set $x'=xg$. Then the
equation above becomes
\[
x'g^{-1}(yf)g=x'((yg)f)
\]
for all $x',y\in X$ and all $g\in \im f\subseteq G$, and so
\begin{equation}
\label{eq:aug}
g^{-1}(yf)g=(yg)f
\end{equation}
for all $x',y\in X$ and all $g\in \im f$. Setting $y=\alpha$, we see
that the lemma holds whenever $g\in\im f$. Setting $y=\alpha g^{-1}$
and multiplying both sides of \eqref{eq:aug} on the left by $g$ and on
the right by $g^{-1}$, we see that the lemma holds when $g^{-1}\in \im
f$. Since the image of $f$ generates $G$, any element of $G$ may be
written as a product as elements from the image of $f$ and their
inverses; the lemma now holds for any $g\in G$, by induction on the
length of such a product.
\end{proof}

Theorems~\ref{thm:rack_construction} and~\ref{thm:rack_general} below
show how to build a rack from its operator group. The theorems
strengthen those of Joyce~\cite[Section~7]{Joyce82}  (who worked with quandles
rather than racks) and Ryder~\cite[Section~5]{Ryder93}.

For a group $G$ and an element $\pi\in G$, we write $C_G(\pi)$ for the
centraliser of $\pi$ in $G$. If $G\leq \Sym(X)$ and $\alpha\in X$, we
write $G_\alpha$ for the point stabiliser of $\alpha$ in $G$.

\begin{theorem}
\label{thm:rack_construction}
Let $X$ be a set, and let $G$ be a subgroup
of $\Sym(X)$. Let $I$ be an index set for the set of orbits of $G$,
and let $\{\alpha_i: i\in I\}\subseteq X$
be a complete set of representatives for the orbits of $G$. For each
$i\in I$ let $\pi_i\in G$, and suppose that
\begin{equation}
\label{eqn:condition_a}
C_G(\pi_i)\geq
  G_{\alpha_i} \text{ for all } i\in I.
\end{equation}
Let $f:X\rightarrow \Sym(X)$ be defined by
\[
(\alpha_ig)f=g^{-1}\pi_ig \text{ for $g\in
G$ and $i\in I$}.
\]
Define $x \rack y = x(yf)$ for all $x,y\in X$. Then $(X,\rack)$ is a rack
with operator group contained in $G$. If
\begin{equation}
\label{eqn:condition_b}
G=\langle g^{-1}\pi_ig:g\in G,
  i\in I\rangle.
\end{equation}
then the operator group of $(X,\rack)$ is equal to $G$.
\end{theorem}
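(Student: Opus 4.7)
The plan is to verify the four assertions of the theorem in sequence: that $f$ is a well-defined map $X\to\Sym(X)$, that $(X,\rack)$ satisfies the two rack axioms, that its operator group is contained in $G$, and that equality holds under~\eqref{eqn:condition_b}.

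First I will establish well-definedness. Each $x\in X$ lies in a unique $G$-orbit, so writing $x=\alpha_ig$ the index $i\in I$ is determined by $x$ while $g\in G$ is determined only up to left multiplication by a member of $G_{\alpha_i}$. I therefore need: whenever $\alpha_ig=\alpha_ih$, the conjugates $g^{-1}\pi_ig$ and $h^{-1}\pi_ih$ coincide. Since $gh^{-1}\in G_{\alpha_i}\leq C_G(\pi_i)$ by~\eqref{eqn:condition_a}, the element $gh^{-1}$ commutes with $\pi_i$, and conjugating this commutation relation by $h$ gives the required equality. This is the only step in the proof that uses condition~\eqref{eqn:condition_a}.

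Second, I will verify the rack axioms. Axiom~(i) is immediate because every value of $f$ lies in $G\leq\Sym(X)$ and so is a bijection of $X$. For axiom~(ii) the crux is the conjugation identity
\[
(yg)f = g^{-1}(yf)g \quad \text{for all } y\in X \text{ and } g\in G,
\]
which is the analogue of Lemma~\ref{lem:augmentation} but which must be proved here directly from the definition of $f$: writing $y=\alpha_ih$ gives $yg=\alpha_i(hg)$, so both sides of the identity evaluate to $(hg)^{-1}\pi_i(hg)$. Given $x,y,z\in X$, setting $g=zf$ and rearranging yields $(yf)(zf)=(zf)\cdot((y(zf))f)$; applying both sides to $x$ and recalling the definition $x\rack y=x(yf)$ produces axiom~(ii).

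Finally, by construction the operator group of $(X,\rack)$ is generated by $\{yf:y\in X\}=\{g^{-1}\pi_ig:g\in G,\,i\in I\}$, which is contained in $G$; hence the operator group is contained in $G$, and equals $G$ precisely when~\eqref{eqn:condition_b} holds. The main obstacle is the well-definedness step, where condition~\eqref{eqn:condition_a} is indispensable and dictates its form as $G_{\alpha_i}\leq C_G(\pi_i)$; once that is in place, the remaining verifications are mechanical unwindings of the definitions, driven entirely by the conjugation identity.
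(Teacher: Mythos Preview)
Your proof is correct and follows essentially the same approach as the paper: both verify well-definedness via condition~\eqref{eqn:condition_a}, check axiom~(i) from $\im f\subseteq G$, derive axiom~(ii) from the conjugation identity $(yg)f=g^{-1}(yf)g$, and read off the operator group as $\langle g^{-1}\pi_ig\rangle$. The only difference is organizational---you isolate the conjugation identity as an explicit lemma before applying it with $g=zf$, whereas the paper embeds the same identity inside a single chain of equalities; the mathematical content is identical.
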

\begin{proof}
Suppose that $(X,\rack)$ is constructed in this way. Note that the map
$f$ is well defined, since~\eqref{eqn:condition_a} is satisfied. Let $x,y,z\in
X$. Let $i,j\in I$ and $g,h\in G$ be such that
$y=\alpha_ig$ and $z=\alpha_jh$. Then
\begin{align*}
(x\rack y)\rack z &=xg^{-1}\pi_igh^{-1}\pi_jh\\
&=x(h^{-1}\pi_jh)(h^{-1}\pi_jh)^{-1}g^{-1}\pi_igh^{-1}\pi_jh\\
&=(xh^{-1}\pi_jh)(gh^{-1}\pi_jh)^{-1}\pi_i(gh^{-1}\pi_jh)\\
&=(x\rack z)\rack(\alpha_igh^{-1}\pi_jh)\\
&=(x\rack z)\rack(y\rack z).
\end{align*}
The maps $x\mapsto x\rack y$ are all permutations, since
$(X)f\subseteq G$. Hence $(X,\rack)$ is a rack. Clearly $f$ is the
augmentation map for $(X,\rack)$. Moreover, we may write the operator group of $(X,\rack)$ as:
\begin{align*}
\langle (\alpha)f:\alpha\in X\rangle
&=\langle (\alpha_ig)f:i\in I,g\in G\rangle\\
&=\langle g^{-1}\pi_ig:i\in I,g\in G\rangle\\
&\leq G,
\end{align*}
with equality in the last line if~\eqref{eqn:condition_b}
holds.
\end{proof}
\begin{theorem}
\label{thm:rack_general}
Every rack on $X$ with operator group $G$ arises in the manner of
Theorem~\ref{thm:rack_construction}. More precisely, let $(X,\rack)$
be a rack with operator group $G$, and let $f$ be the augmentation map
for $(X,\rack)$. Let $\{\alpha_i:i\in I\}$ be a complete set of orbit
representatives for $G$, and define $\pi_i=\alpha_if$ for $i\in I$. 
Then~\eqref{eqn:condition_a} and~\eqref{eqn:condition_b}
hold. Moreover, $(\alpha_ig)f=g^{-1}\pi_ig$
for all $g\in G$ and $i\in I$, and $x\rack y = x(yf)$ for all $x,y\in X$.
\end{theorem}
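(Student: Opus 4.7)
The plan is to note that Lemma~\ref{lem:augmentation} does essentially all the work: it supplies the key identity $(\alpha g)f=g^{-1}(\alpha f)g$ for every $g\in G$ and $\alpha\in X$, and the four assertions of the theorem fall out directly. I would verify in turn the defining formulas $x\rack y=x(yf)$ and $(\alpha_i g)f=g^{-1}\pi_i g$, and then conditions~\eqref{eqn:condition_a} and~\eqref{eqn:condition_b}. The first formula is merely the definition of the augmentation map, since $(yf)=f_y$ and $xf_y=x\rack y$; the second is Lemma~\ref{lem:augmentation} applied with $\alpha=\alpha_i$, combined with the definition $\pi_i=\alpha_i f$.

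Next I would check the two structural conditions. For~\eqref{eqn:condition_a}, I would take $g\in G_{\alpha_i}$, so that $\alpha_i g=\alpha_i$; Lemma~\ref{lem:augmentation} then yields $\pi_i=(\alpha_i)f=(\alpha_i g)f=g^{-1}\pi_i g$, placing $g$ in $C_G(\pi_i)$. For~\eqref{eqn:condition_b}, I would use that by definition the operator group is $G=\langle \alpha f:\alpha\in X\rangle$; since $\{\alpha_i:i\in I\}$ is a complete set of orbit representatives, every $\alpha\in X$ has the form $\alpha_i g$ for some $i\in I$ and $g\in G$, and the formula just established rewrites each generator $\alpha f$ as $g^{-1}\pi_i g$, giving the desired equality.

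There is no serious obstacle here; the theorem is bookkeeping once Lemma~\ref{lem:augmentation} is available. The one point worth highlighting is that~\eqref{eqn:condition_a} is more than an accidental feature of the extracted data: since $\alpha_i g=\alpha_i h$ is equivalent to $gh^{-1}\in G_{\alpha_i}$ while $g^{-1}\pi_i g=h^{-1}\pi_i h$ is equivalent to $gh^{-1}\in C_G(\pi_i)$, condition~\eqref{eqn:condition_a} is exactly the well-definedness requirement for the formula in Theorem~\ref{thm:rack_construction}. Thus the theorem can be read as saying that the triple $(G,\{\alpha_i\},\{\pi_i\})$ extracted from any rack automatically meets both the well-definedness and generation hypotheses of the construction, so that the construction is the universal one.
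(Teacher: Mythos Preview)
Your proof is correct and follows essentially the same route as the paper: both derive the formula $(\alpha_i g)f=g^{-1}\pi_i g$ and condition~\eqref{eqn:condition_a} directly from Lemma~\ref{lem:augmentation}, and obtain~\eqref{eqn:condition_b} by rewriting the defining generators of $G$ via that formula. Your additional remark that~\eqref{eqn:condition_a} is precisely the well-definedness condition for the construction is a nice gloss not made explicit in the paper.
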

\begin{proof}
  The last sentence of the theorem follows from
  Lemma~\ref{lem:augmentation}, and the definition of the
  augmentation map. If $g\in G_{\alpha_i}$ we have that $(\alpha_i
  g)f=(\alpha_i)f=\pi_i$, and so $g\in C_G(\pi_i)$ by
  Lemma~\ref{lem:augmentation}. Thus~\eqref{eqn:condition_a}
  holds. Finally, as $G$ is the operator group of $(X,\rack)$,
\begin{align*}
G&=\langle (\alpha)f:\alpha\in X\rangle\\
&=\langle (\alpha_ig)f:i\in I,g\in G\rangle\\
&=\langle g^{-1}(\alpha_if)g:i\in I,g\in G\rangle,
\end{align*}
by Lemma~\ref{lem:augmentation} and so~\eqref{eqn:condition_b} holds.
\end{proof}

We remark that `rack' may be replaced by `quandle' in the theorems
above, provided that we also add the condition that $\pi_i\in
G_{\alpha_i}$ for $i\in I$. `Quandle' may in turn be replaced by `kei' if we
insist in addition that $\pi_i$ has order dividing $2$ for $i\in I$.

As an aside, we end this section by giving two simple consequences of
the Theorems~\ref{thm:rack_construction}
and~\ref{thm:rack_general}. The first is a strengthening of a result
of Ryder~\cite[Theorem~3.2]{Ryder93}, which asserts that every
abstract group is the operator group of some rack.

\begin{corollary}
Every abstract group is the operator group of some quandle. An
abstract group is the operator group of some kei if and only if it is
generated by its involutions.
\end{corollary}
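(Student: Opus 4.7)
The strategy is to apply Theorem~\ref{thm:rack_construction}, with the quandle and kei modifications from the remark following Theorem~\ref{thm:rack_general}, by constructing for each abstract group $G$ a faithful $G$-set $X$ together with a compatible choice of $\pi_i$ at each orbit representative. The easy ``only if'' half of the kei statement is immediate: if $G$ is the operator group of a kei, then $G$ is generated by the permutations $f_y$, each of order dividing $2$, hence by involutions.

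For the constructive direction, let $G$ be an abstract group (and in the kei case, assume $G$ is generated by a set $S$ of involutions; in the quandle case simply take $S=G$). Set
\[
X \;=\; G \;\sqcup\; \bigsqcup_{s\in S}\bigl(C_G(s)\backslash G\bigr),
\]
on which $G$ acts by right multiplication on itself and on each space of right cosets. Choose orbit representatives $\alpha_0 = e\in G$ with stabiliser $\{e\}$, and $\alpha_s = C_G(s)$ with stabiliser $C_G(s)$ for $s\in S$. Assign $\pi_{\alpha_0}=e$ and $\pi_{\alpha_s}=s$.

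The hypotheses of Theorem~\ref{thm:rack_construction} are then routine to verify. Condition~\eqref{eqn:condition_a} holds with equality on each coset orbit (since $C_G(\pi_{\alpha_s})=C_G(s)=G_{\alpha_s}$) and trivially on the regular orbit. The quandle condition $\pi_i\in G_{\alpha_i}$ reduces to $s\in C_G(s)$ and $e\in\{e\}$; in the kei case the additional condition $\pi_i^2 = e$ holds by our choice of $S$. Condition~\eqref{eqn:condition_b} becomes the assertion that the $G$-conjugates of $S\cup\{e\}$ generate $G$, which is immediate because $S$ itself generates $G$.

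The one subtle point, and the reason for including the regular summand $G$ in $X$, is faithfulness: we need the operator group to coincide with $G$ as an abstract group, not merely as some quotient of $G$ acting on a coset space. Right multiplication of $G$ on itself is faithful, so the map $G\to\Sym(X)$ is injective and the subgroup generated by the conjugates of the $\pi_i$ is abstractly isomorphic to $G$. Without this summand the natural map $G\to\Sym(X)$ would have kernel $Z(G)$ when $S=G$, and one would recover only $G/Z(G)$ as the operator group.
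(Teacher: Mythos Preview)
Your proof is correct and follows essentially the same approach as the paper's: both construct $X$ as the disjoint union of the regular $G$-set $G$ (to ensure faithfulness) together with coset spaces $C_G(s)\backslash G$ for a chosen collection of elements $s$, assign $\pi=e$ on the regular orbit and $\pi=s$ on the coset orbits, and then verify the hypotheses of Theorem~\ref{thm:rack_construction} together with the quandle/kei refinements. The only cosmetic difference is that the paper phrases the regular orbit as arising from an index $i_0$ with $\pi_{i_0}=1$ and takes the $\pi_i$ to be any set whose normal closure is $G$, whereas you take $S=G$ (or a generating set of involutions) directly; your explicit discussion of why the regular summand is needed for faithfulness is a nice addition.
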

\begin{proof}
  Let $G$ be an abstract group, and let $\{\pi_i\in G:i\in I\}$ be a
  set of elements whose normal closure in $G$ is equal to $G$. Without
  loss of generality, we may assume that there exists $i_0\in I$ such
  that $\pi_{i_0}=1$. For $i\in I\setminus \{i_0\}$, let $X_i$ be the
  set of right cosets of $C_G(\pi_i)$ in $G$, and let
  $\alpha_i=C_G(\pi_i)\in X_i$. Define $X_{i_0}=G$, and
  $\alpha_{i_0}=1\in X_{i_0}$. Let $X$ be the disjoint union of the
  sets $X_i$ (for $i\in I$), and let $G$ act on $X$ by right
  multiplication. Note that $G$ acts faithfully on $X_{i_0}$, so we
  have realised $G$ as a subgroup of $\Sym(X)$.
  Theorem~\ref{thm:rack_construction} shows that $G$ is the operator
  group of a rack. Moreover, using the fact that
  $\alpha_i\pi_i=\alpha_i$ for $i\in I$, it is easy to check that this
  rack is in fact a quandle. This establishes the first statement of
  the corollary.

  The operator group of a kei is generated by its involutions, since
  the definitions of kei and operator group provide a generating set
  consisting of elements of order dividing $2$. Let $G$ be an abstract
  group generated by its involutions. If we define $\{\pi_i\in G:i\in
  I\}$ (for some suitable index set $I$) to be the set of all elements
  of order dividing $2$ in $G$, then the construction above realises
  $G$ as the operator group of a kei.
\end{proof}

We remark that there are many groups that are not generated by their
involutions, and so do not occur as the operator group of any kei. The most
obvious examples of such groups are the non-trivial groups of odd order; more
generally, any group whose Sylow $2$-subgroup is normal and proper is not
generated by its involutions.

\begin{corollary}
Suppose $G\leq\Sym(X)$ is transitive. If $G$ is the
operator group of a rack $(X,\rack)$, then there exists $\pi\in G$
whose normal closure is equal to $G$. Thus not all permutation groups occur as operator groups.
\end{corollary}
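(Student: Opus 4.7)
The plan is to read off the first assertion directly from Theorem~\ref{thm:rack_general}, and then to supply a concrete transitive permutation group that fails the normal-closure condition for the second assertion.

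Suppose $G \leq \Sym(X)$ is transitive and is the operator group of some rack $(X,\rack)$. Since there is only one $G$-orbit on $X$, the index set $I$ in Theorem~\ref{thm:rack_general} is a singleton: pick any $\alpha \in X$ as the unique orbit representative and set $\pi = \alpha f$, where $f$ is the augmentation map. Then condition~\eqref{eqn:condition_b} of Theorem~\ref{thm:rack_general} reduces to
\[
G = \langle g^{-1}\pi g : g \in G\rangle,
\]
and the right-hand side is precisely the normal closure of $\pi$ in $G$. This proves the first assertion.

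For the second assertion I would exhibit any transitive permutation group whose normal closures of single elements are all proper. The cleanest example is the regular action of a non-cyclic abelian group, for instance $G = (\mathbb{Z}/2)^2$ acting on itself by right translation. Here $G$ is transitive (in fact regular) on a set of size $4$, yet since $G$ is abelian the normal closure of any $\pi \in G$ is just $\langle \pi \rangle$, which is cyclic and hence a proper subgroup of $G$. By the first part of the corollary, $G$ cannot be realised as the operator group of any rack on a set of size $4$ (with the given action).

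The only step requiring any thought is the choice of example; verifying the reduction to~\eqref{eqn:condition_b} is essentially immediate once one notices that transitivity collapses the index set $I$ to a single point, so that the generating set in~\eqref{eqn:condition_b} is literally a single conjugacy class. No other obstacles arise.
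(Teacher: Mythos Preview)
Your proposal is correct and matches the paper's proof essentially line for line: the paper also reads off the first assertion from Theorem~\ref{thm:rack_general} using $|I|=1$, and for the second assertion takes a non-cyclic abelian group acting transitively (your $(\mathbb{Z}/2)^2$ is a specific instance), observing that the normal closure of any element is cyclic and hence proper.
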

\begin{proof}
Suppose $(X,\rack)$ is a rack with operator group $G$. Then
Theorem~\ref{thm:rack_general} implies that there exists $\pi\in G$
whose normal closure is equal to $G$ and so the first statement of the
corollary follows.

Let $G$ be a non-cyclic abelian group acting transitively on $X$.
Suppose, for a contradiction, that $G$ is the operator group of a
rack $(X,\rack)$. Let $\pi\in G$ be an element whose normal closure in $G$ is
equal to $G$. Then, since $G$ is abelian, $G=\langle \pi\rangle$ and
so $G$ is cyclic. This contradiction establishes the final assertion
of the corollary.
\end{proof}

\section{A lower bound}
\label{sec:lower_bound}

\begin{theorem}
\label{thm:lower_bound}
The number $\fkei(n)$ of isomorphism classes of kei of order $n$ is at least $2^{\frac{1}{4}n^2-O(n\log n)}$.
\end{theorem}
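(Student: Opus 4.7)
The plan is to apply Theorem~\ref{thm:rack_construction}, as adapted to the kei case in the remark following Theorem~\ref{thm:rack_general}, with $G$ chosen to be an elementary abelian $2$-group in which every point stabiliser admits many candidates for $\pi_i$. Write $n=2m$ (the case $n$ odd is handled similarly, by adjoining a single fixed point, which affects only the lower-order term). Partition $X$ into $m$ pairs $\{a_i,b_i\}$, let $\tau_i$ be the transposition $(a_i\;b_i)$, and set
\[
G=\langle\tau_1,\ldots,\tau_m\rangle\leq\Sym(X).
\]
Then $G\cong(\mathbb{Z}/2\mathbb{Z})^m$, the orbits of $G$ on $X$ are exactly the pairs, with representatives $\alpha_i=a_i$, and each stabiliser $G_{\alpha_i}=\langle\tau_j:j\neq i\rangle$ has order $2^{m-1}$.

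For each $i$ I would choose any $\pi_i\in G_{\alpha_i}$. Three conditions need checking: condition~\eqref{eqn:condition_a} is automatic because $G$ is abelian, so $C_G(\pi_i)=G\supseteq G_{\alpha_i}$; the quandle condition $\pi_i\in G_{\alpha_i}$ holds by construction; and the kei condition $\pi_i^2=1$ holds because $G$ has exponent~$2$. By Theorem~\ref{thm:rack_construction}, these data define a kei on $X$ with
\[
x\rack y=x\pi_i\quad\text{whenever $y$ lies in the $i$-th orbit,}
\]
since the conjugate $g^{-1}\pi_ig$ equals $\pi_i$ in an abelian group. Distinct tuples $(\pi_1,\ldots,\pi_m)$ therefore produce distinct binary operations, yielding at least $(2^{m-1})^m=2^{m(m-1)}$ labelled kei on $X$.

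To pass from labelled kei to isomorphism classes, divide by the number of bijections of $X$: each isomorphism class contains at most $|\Sym(X)|=n!$ labelled representatives. Hence
\[
\fkei(n)\geq\frac{2^{m(m-1)}}{n!}=2^{m(m-1)-\log_2(n!)}=2^{\frac{1}{4}n^2-O(n\log n)},
\]
using $m=n/2$ and Stirling's estimate $\log_2(n!)=O(n\log n)$. The only point demanding care is verifying that distinct tuples $(\pi_1,\ldots,\pi_m)$ really do give distinct operations on $X$; this holds because the abelianness of $G$ makes $(y)f$ depend only on the orbit of $y$, so the operation literally records the tuple. There is no substantial obstacle beyond this: the exponent $\tfrac14$ emerges directly from the freedom to choose each of $m\approx n/2$ elements $\pi_i$ from a stabiliser of size $2^{m-1}\approx 2^{n/2}$.
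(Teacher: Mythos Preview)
Your proposal is correct and is essentially the same construction as the paper's proof: the paper likewise partitions $X$ into $k=\lfloor n/2\rfloor$ pairs (plus a fixed point when $n$ is odd), takes $G$ to be the elementary abelian $2$-group generated by the corresponding transpositions, and parametrises the choices $\pi_i\in G_{\alpha_i}$ by a $k\times k$ zero--one matrix with zero diagonal, obtaining $2^{k(k-1)}$ labelled kei and then dividing by $n!$. Your direct phrasing in terms of tuples $(\pi_1,\ldots,\pi_m)\in\prod_i G_{\alpha_i}$ is exactly the same count and the same verification, just without the matrix bookkeeping.
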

We remark that Theorem~\ref{thm:lower_bound} establishes the lower bound in Theorem~\ref{thm:main_enumeration}.

\begin{proof}[Proof of Theorem~\ref{thm:lower_bound}]
Let $X=\{1,2,\ldots,n\}$, and define $k=\lfloor n/2\rfloor$. Set
$T=\{1,2,\ldots,k\}$.

Let $E=(e_{ij})$ be a $k\times k$ matrix such that $e_{ij}\in\{0,1\}$
for all $i,j\in T$ with $i\not=j$, and such that $e_{ii}=0$.  To prove
the theorem, it suffices to construct kei $X_E$ on $X$, in such a way
that $X_E\not=X_{E'}$ whenever $E\not=E'$. (It will happen that
$X_E\cong X_{E'}$ in some cases.) To see that the theorem follows from
this, first note that there are $2^{(k-1)k}$ matrices $E$. Moreover,
an isomorphism class of kei sharing the same underlying set $X$ can
contain at most $n!$ distinct elements, since there are at most $n!$
choices for an isomorphism $\theta: X\rightarrow X$. So we will have
constructed at least $2^{(k-1)k}/n!$ distinct isomorphism classes of
kei. Since $n!\leq n^n=2^{n\log n}$ and $k\geq (n-1)/2$ the theorem
will therefore follow.

For $i\in T$, let $X_i=\{2i-1,2i\}$ and
define $X_{k+1}=\{n\}$. Define $I=T$ when
$n$ is even, and $I=T\cup\{k+1\}$ when $n$ is odd. So
$\bigcup_{i\in I} X_i$ is a partition of $X$ containing exactly $k$
subsets of size $2$, and possibly a single set of size~$1$.

Let
\[
G=\{\pi\in \Sym(X):X_i\pi=X_i\text{
  for all }i\in I\}\leq \Sym(X).
\]
 Then $G$ is an elementary abelian $2$-group
of order $2^k$, generated by the transpositions $\tau_i=(2i-1,2i)$ for
$i\in T$. The orbits of $G$ are the sets $X_i$ where $i\in I$. Define
$\alpha_i=2i-1$ for $i\in I$. Then $\{\alpha_i:i\in I\}$ is a complete set of
representatives for the orbits of $G$.

\begin{figure}
\[E=\begin{pmatrix}
0&1&0\\
1&0&1\\
1&0&0
\end{pmatrix}
\begin{array}{c|cccccccc}
\rack&1&2&3&4&5&6&7\\\hline
1&1&1&2&2&1&1&1\\
2&2&2&1&1&2&2&2\\
3&4&4&3&3&4&4&3\\
4&3&3&4&4&3&3&4\\
5&6&6&5&5&5&5&5\\
6&5&5&6&6&6&6&6\\
7&7&7&7&7&7&7&7
\end{array}
\]
\caption{An example of a kei $X_E$ when $n=7$}
\label{fig:kei_example}
\end{figure}
We construct each kei $X_E$ as follows (see
Figure~\ref{fig:kei_example} for an example). Define permutations
$\pi_i\in G$ for $i\in I$ as follows. For $i\in T$, define
\[
\pi_i=\tau_{1}^{e_{1i}}\tau_{2}^{e_{2i}}\cdots \tau_k^{e_{ki}}.
\]
For $i\in I\setminus T$ (so $n$ is odd and $i=k+1$), let $\pi_i$ be
the identity permutation. Note that distinct matrices $E$ give rise to
distinct lists of permutations $(\pi_i:i\in I)$. The
condition~\eqref{eqn:condition_a} of Theorem~\ref{thm:rack_construction}
is satisfied since $G$ is abelian and $\pi_i\in G$ for $i\in I$, so we may define $f:X\rightarrow G$
and $(X,\rack)$ as in Theorem~\ref{thm:rack_construction}. Let
$X_E=(X,\rack)$. By Theorem~\ref{thm:rack_construction}, $X_E$ is a
rack whose operator group is contained in $G$. It is not hard to check
that $x\rack x=x$, using the fact that $e_{ii}=0$ for all $i\in T$,
and so $X_E$ is a quandle. All the elements of the operator group of
$X_E$ have order dividing $2$, since the operator group is contained
in the elementary abelian $2$-group $G$. Thus, $X_E$ is a
kei. Finally, since $\pi_i$ is equal to the map $x\mapsto x\rack
\alpha_i$, distinct matrices $E$ give rise to distinct kei
$(X,\rack)$. So the theorem follows.
\end{proof}

\section{An upper bound}
\label{sec:upper_bound}

This section aims to prove the following theorem.

\begin{theorem}
\label{thm:upper_bound}
The number $\frack(n)$ of isomorphism classes of racks of order $n$ is
at most $2^{(c+o(1))n^2}$, where $c=\frac{1}{6}(\log_2
24)+\frac{1}{2}(\log_2 3)\approx 1.5566$.
\end{theorem}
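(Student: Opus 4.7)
The plan is to bound $\frack(n)$ by counting racks on a fixed $n$-element set $X$; since every isomorphism class is realized on $X$, this trivially dominates $\frack(n)$. By Theorem~\ref{thm:rack_general}, any rack on $X$ with operator group $G\le\Sym(X)$ is uniquely specified by $G$ together with the tuple $(\pi_i)_{i\in I}$ of augmentation values at a fixed set of orbit representatives $\alpha_i$, where $\pi_i\in C_G(G_{\alpha_i})$ (and the normal closure of $\{\pi_i\}$ equals $G$). Dropping the normal-closure condition gives the clean upper bound
\[
\frack(n) \;\le\; \sum_{G\le\Sym(X)}\, \prod_{i=1}^{r(G)} |C_G(G_{\alpha_i})|,
\]
with $r(G)$ the number of orbits of $G$ on $X$. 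The task reduces to showing this is at most $2^{(c+o(1))n^2}$.

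I would stratify the sum by the orbit partition of $X$ (there are only $2^{O(n\log n)}$ set partitions, a lower-order factor) and then by the block-wise transitive actions of $G$. For a fixed partition with block sizes $n_1,\ldots,n_r$, we have $G\le\prod_i\Sym(X_i)$ with each projection $H_i$ transitive, so $|G|\le\prod_i|H_i|$. Dixon's theorem (a solvable transitive subgroup of $\Sym(m)$ has order at most $24^{(m-1)/3}$), together with standard bounds on the order and number of non-solvable transitive permutation groups (Babai--Cameron--P\'alfy and Pyber, using the classification of finite simple groups), then bound $|G|$ and in particular $|C_G(G_{\alpha_i})|\le|G|$.

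The two summands of $c=\frac{1}{6}\log_2 24+\frac12\log_2 3$ should arise from two distinct factors in the bound. The term $\frac16\log_2 24$ comes from the centralizer product $\prod_i |C_G(G_{\alpha_i})|\le |G|^r$; an optimization over orbit structures in the Dixon regime yields this exponent, the extremum occurring for configurations of small blocks where Dixon's bound is essentially tight. The term $\frac12\log_2 3$ reflects the count of admissible subgroups $G\le\prod_i\Sym(X_i)$ per fixed partition; for the dominating orbit structures this count is controlled by the number of subspaces of an $\mathbb{F}_3$-vector space of suitable dimension, giving a factor of order $3^{(n^2/2)(1+o(1))}$ via the Gaussian binomial asymptotics $p^{m^2/4+o(m^2)}$ specialised to $p=3$.

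The main obstacle is calibrating the constant $c$ exactly. One must (i) show that non-solvable operator groups contribute negligibly (a Pyber-type estimate on the number and order of non-solvable transitive permutation groups), (ii) enumerate the subdirect subgroups of $\prod_i\Sym(X_i)$ sharply enough to realise the $\frac12\log_2 3$ exponent rather than a naive order-times-generators bound, and (iii) simultaneously optimise the orbit structure against both the $|G|^r$ and subgroup-count factors, verifying that no alternative regime beats the extremal family. Identifying an extremal configuration of operator groups that saturates both bounds at once is the delicate quantitative step.
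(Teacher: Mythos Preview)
Your opening reduction is exactly right: by Theorem~\ref{thm:rack_general} a rack on $X$ with operator group $G$ is determined by the tuple $(\pi_i)$ with $\pi_i\in C_G(G_{\alpha_i})$, so
\[
\frack(n)\;\le\;\sum_{G\le\Sym(X)}\ \prod_{i=1}^{r(G)}\bigl|C_G(G_{\alpha_i})\bigr|.
\]
After that, however, your plan drifts. You attribute $\tfrac16\log_2 24$ to the centralizer product via Dixon's bound on $|G|$, and $\tfrac12\log_2 3$ to a subdirect/subspace count of the groups $G$. Both attributions are backwards, and neither argument, as sketched, actually produces the stated constant. For instance, optimising $|G|^r$ over solvable $G$ with Dixon's bound does not yield $24^{n^2/6}$ (the extremal all-blocks-of-size-$2$ family gives only $2^{n^2/4}$), and there is no reason the subgroup count should be governed by $\mathbb{F}_3$-subspaces.

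The two constants come from two off-the-shelf theorems you have not invoked. First, the sum over $G$ is controlled directly by Pyber's theorem that $\Sym(n)$ has at most $24^{(1/6+o(1))n^2}$ subgroups; this is the source of $\tfrac16\log_2 24$. Second, for fixed $G$ the centralizer factor is handled by the observation that $G_{\alpha_i}\le C_G(\pi_i)$ forces the $G$-conjugacy class of $\pi_i$ to have size at most $[G:G_{\alpha_i}]=n_i$; hence one may choose $\pi_i$ by first choosing its conjugacy class and then an element of that class. Mar\'oti's bound gives at most $3^{(n-1)/2}$ conjugacy classes in any $G\le\Sym(n)$, so the class choices cost at most $3^{nr/2}\le 3^{n^2/2}$ (the source of $\tfrac12\log_2 3$), while $\prod_i n_i\le 3^{n}=2^{O(n)}$ is lower order. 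No stratification by orbit partition, no Dixon, no CFSG-dependent case analysis of non-solvable groups, and no optimisation step is needed.
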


We remark that the proof of this theorem will complete the proof of
Theorem~\ref{thm:main_enumeration}. We require the following two
results from the theory of permutation groups. The following theorem
is due to Laci Pyber~\cite[Corollary~3.3]{Pyber93}.

\begin{theorem}
\label{thm:perm_group_bound}
The number of subgroups of $\Sym(X)$ with $|X|=n$ is bounded
above by $24^{(\frac{1}{6}+o(1))n^2}$.
\end{theorem}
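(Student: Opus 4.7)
The plan is to bound the subgroup count of $\Sym(X)$ by decomposing each subgroup along its orbits and then using standard quantitative tools from the theory of permutation groups and $p$-groups. Concretely, any subgroup $H \leq \Sym(X)$ preserves its own orbit partition $X = X_1 \sqcup \cdots \sqcup X_r$, and embeds as a subdirect product in $\prod_{i=1}^r H|_{X_i}$, where each constituent $H|_{X_i}$ is transitive on $X_i$. Since the number of set partitions of $X$ is at most $n^n = 2^{o(n^2)}$, it suffices to fix a partition and bound (a)~the number of choices of transitive constituents $T_i \leq \Sym(X_i)$, and (b)~the number of subdirect products in $\prod_i T_i$.

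For (a), I would invoke the O'Nan--Scott theorem together with Cameron's CFSG-based bound that every primitive $P \leq \Sym(m)$ other than $A_m, S_m$ satisfies $|P| \leq m^{O(\log m)}$; iterating through block systems gives at most $2^{O(m (\log m)^2)}$ transitive subgroups of $\Sym(m)$, a quantity that is $2^{o(n^2)}$ and hence absorbed into the $o(1)$ in the exponent. For (b), I would use iterated Goursat's lemma to parametrise subgroups of $\prod_i T_i$ by diagrams of common chief factors. The chief factors are either direct powers of nonabelian simple groups (where a quadratic subgroup bound applies) or elementary abelian $p$-groups, for which the Kov\'acs--Newman estimate that $(\mathbb{Z}/p)^k$ has at most $p^{k^2/4+O(k)}$ subgroups is the essential input. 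Summing over orbit-size partitions and chief-factor distributions and optimising over these parameters produces an exponent of the form $c\,n^2$.

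The main obstacle is extracting the precise constant $c = \tfrac{1}{6}\log_2 24$: a naive subgroup count for a finite group of order $N$ only yields $N^{O(1)} = 2^{O(n\log n)}$ subgroups and misses the quadratic regime entirely, so the quadratic $p$-group bound is essential, and the optimisation over how the $T_i$ distribute their chief factors (together with an honest accounting of the Goursat diagrams linking isomorphic factors across different $i$) must be carried out explicitly. Matching this against the extremal subgroup structure of $\Sym(n)$, which is concentrated in the Sylow $2$-subgroup together with its small-block wreath structure, is the technical heart of Pyber's argument and is the step I would expect to occupy most of the work.
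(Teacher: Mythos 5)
The paper never proves this statement: it is imported verbatim, with citation, as Corollary~3.3 of Pyber~\cite{Pyber93}, and is one of the two black-box permutation-group inputs (the other being Mar\'oti's bound) used in Section~\ref{sec:upper_bound}. So your proposal can only be measured against Pyber's own argument, and on its own terms it is not a proof but a plan, both of whose key steps are deferred --- and the one concretely checkable new claim in it, step~(a), is false. You assert that iterating Cameron's bound on primitive groups through block systems shows that $\Sym(X)$ with $|X|=m$ has at most $2^{O(m(\log m)^2)}$ transitive subgroups. Here is a counterexample. Let $a\geq 2$, $k=2^a$, $m=2k$; partition the $m$ points into $k$ blocks of size $2$ and let $T\cong C_2^a$ act regularly on the set of blocks, so that $C_2\wr T=C_2^k\rtimes T\leq\Sym(X)$. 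Identify the base group $C_2^k$ with the group algebra $\mathbb{F}_2[T]\cong\mathbb{F}_2[x_1,\ldots,x_a]/(x_1^2,\ldots,x_a^2)$. Let $M$ be the span of the square-free monomials of degree exactly $\lceil a/2\rceil$ and $J$ the span of those of higher degree. For every subspace $U\leq M$, the space $I(U)=U+J$ is an ideal (multiplying a monomial by $x_i$ either raises its degree or annihilates it), hence a $T$-invariant subgroup of the base, and it contains $x_1x_2\cdots x_a=\sum_{t\in T}t$, the all-ones vector. Therefore $G_U:=I(U)\rtimes T$ is a \emph{transitive} subgroup of $\Sym(X)$: it is transitive on blocks via $T$, and the all-ones vector swaps the two points of every block. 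Distinct $U$ give distinct $G_U$, since $G_U\cap C_2^k=I(U)$ and $I(U)\cap M=U$. As $\dim M=\binom{a}{\lceil a/2\rceil}\geq 2^a/(a+1)$, the number of such subgroups is at least $2^{\lfloor(\dim M)^2/4\rfloor}=2^{\Omega(m^2/(\log m)^2)}$, which swamps $2^{O(m(\log m)^2)}$; and since each $\Sym(X)$-conjugacy class has size at most $m!=2^{O(m\log m)}$, the same lower bound holds for conjugacy classes of transitive subgroups.

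The failure is not cosmetic. Cameron's theorem controls the \emph{orders} of primitive groups; it says nothing that makes imprimitive transitive groups scarce, and the examples above show that transitive subgroups are (up to logarithmic factors in the exponent) as plentiful as the arbitrary subgroups you set out to count. This makes your reduction essentially circular: enumerating the transitive constituents $T_i$ is the same quadratic-exponent problem you started with, so it cannot be absorbed into the $o(1)$ by appeal to known results about primitive groups. Even if step~(a) were repaired --- and a bound of $2^{o(n^2)}$ on the number of transitive subgroups is itself a nontrivial theorem, precisely because of wreath-type examples like $G_U$ --- the entire quantitative content of the statement, the constant $\tfrac16\log_2 24$, sits inside your step~(b), which you explicitly defer as ``the technical heart''. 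Note also that iterated Goursat data includes isomorphisms between sections of distinct factors $T_i$, of which there can be as many as $|T_i|=2^{\Theta(n\log n)}$ per pair, and nothing in your outline controls the accumulation of such factors to $2^{cn^2}$ rather than, say, $2^{cn^2\log n}$. As it stands, the proposal should not be regarded as a proof or near-proof; the correct course, as the paper takes, is to cite Pyber's Corollary~3.3.
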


The next theorem is due to Attila Mar\'oti~\cite{Maroti05},
extending work of Kov\'acs and Robinson~\cite{KovacsRobinson93}, and
of Riese and Schmid~\cite{RieseSchmid03}.

\begin{theorem}
\label{thm:conj_class_bound}
Let $n>2$. Let $G$ be a subgroup of $\Sym(X)$, with $|X|=n$. Then the
number of conjugacy classes of $G$ is bounded above by $3^{(n-1)/2}$.
\end{theorem}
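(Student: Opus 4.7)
The plan is to prove by strong induction on $n$ that $k(G) \le 3^{(n-1)/2}$ for every subgroup $G \le \Sym(X)$ with $|X| = n > 2$, where $k(G)$ denotes the number of conjugacy classes of $G$. Two elementary tools drive the reductions: the standard inequality $k(G) \le k(N)\,k(G/N)$ valid for every normal subgroup $N \trianglelefteq G$, and the trivial estimate $k(G) \le |G|$. A finite list of small degrees is verified directly to anchor the induction and to dispatch borderline orbit sizes.

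The inductive step splits according to the $G$-action on $X$. \emph{Intransitive case.} Let $Y$ be a $G$-orbit of size $m$ with $1 \le m < n$, and let $N$ be the kernel of the restriction $G \to \Sym(Y)$; then $N$ acts faithfully on $X \setminus Y$ (of degree $n-m$) and $G/N$ embeds in $\Sym(Y)$. Applying the inductive hypothesis to each factor, together with the trivial bounds $k \le 2$ when an orbit has size $\le 2$, gives $k(G) \le k(N)\,k(G/N) \le 3^{(n-2)/2}$, comfortably within the target.

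\emph{Transitive imprimitive case.} Suppose $G$ preserves a non-trivial block system of $b$ blocks of size $m$ (so $n = bm$), and let $N$ be the kernel of the action on blocks. Then $G/N \le \Sym(b)$ and $N$ is intransitive with the blocks as its $b$ orbits. Iterating the intransitive reduction yields $k(N) \le 3^{(n-b)/2}$, while the inductive hypothesis gives $k(G/N) \le 3^{(b-1)/2}$; their product is exactly $3^{(n-1)/2}$. The borderline subcase $m = 2$ is genuinely tight, because $N \le (C_2)^b$ is abelian and the naive estimate $|N| \cdot k(G/N) \le 2^b \cdot 3^{(b-1)/2}$ overshoots. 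Here one must use the sharper observation that the $G$-classes contained in $N$ correspond to $G/N$-orbits on $N$, and count these orbits via Burnside applied to the permutation action of $G/N$ on coordinates; this recovers the required bound.

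\emph{Primitive case.} This is where the argument becomes non-elementary and constitutes the principal obstacle. One appeals to the classification of finite simple groups. If $G$ contains $A_n$, then $k(G) \le k(S_n) = p(n)$, the partition function, which grows as $e^{O(\sqrt n)}$ and is dominated by $3^{(n-1)/2}$ once $n$ passes a small threshold. Otherwise $G$ is a proper primitive subgroup and Mar\'oti's own refinement of the Babai--Cameron--Praeger bound furnishes $|G| \le n^{1 + \lfloor \log_2 n \rfloor}$, giving $k(G) \le |G| = 2^{O((\log n)^2)}$, far below $3^{(n-1)/2}$. The small primitive degrees not covered by these asymptotic estimates are finite in number and are verified by direct inspection. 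The dependence of this case on deep classification-based order bounds is the main difficulty; by comparison, the intransitive and imprimitive reductions are essentially bookkeeping, modulo the one delicate wreath-product refinement noted above.
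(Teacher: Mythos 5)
The first thing to note is that the paper contains no proof of Theorem~\ref{thm:conj_class_bound} for you to match: it is quoted as an external result of Mar\'oti~\cite{Maroti05}, extending Kov\'acs--Robinson and Riese--Schmid, and all known proofs of it depend on the classification of finite simple groups. What you have written is a road map for reproving that theorem, and it is in fact the same road map the literature follows (induction via $k(G)\le k(N)\,k(G/N)$ through intransitive and imprimitive reductions, with CFSG-based order bounds handling the primitive case). It is not, however, a proof: the places where you gesture are precisely where the content of Mar\'oti's paper lies, and two of your steps are wrong as stated.

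Concretely: (1) In the imprimitive case the breakdown is not only at $m=2$; the case $b=2$ fails as well, and you never mention it. With two blocks of size $m=n/2$, the product bound gives $k(G)\le k(N)\,k(G/N)\le 3^{(n-2)/2}\cdot 2=(2/\sqrt{3})\,3^{(n-1)/2}>3^{(n-1)/2}$. This is an infinite family of degrees, so it cannot be absorbed into a ``finite list of small degrees verified directly''. (2) Your proposed repair for $m=2$ only counts the $G$-classes contained in $N$; the overshoot comes from using $k(N)$ to bound the number of classes lying over \emph{every} class of $G/N$, and you supply no substitute inequality for the classes outside $N$. The refinement you would need, something like $k(G)\le k_G(N)\,k(G/N)$ with $k_G(N)$ the number of $G$-classes inside $N$, is not a standard fact and would itself require proof. (3) In the primitive case, the order bound you invoke, $|G|\le n^{1+\lfloor\log_2 n\rfloor}$ for every primitive group not containing $A_n$, is false: Mar\'oti's order theorem exempts not only the Mathieu groups but also the ``standard'' groups in subset and product actions. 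For example, $S_m$ acting on the $n=\binom{m}{2}$ unordered pairs is primitive, does not contain $A_n$, and has order $m!=2^{\Theta(\sqrt{n}\log n)}$, which dwarfs $n^{1+\log_2 n}=2^{O((\log n)^2)}$. Such groups do satisfy the desired conclusion (their class numbers are roughly $p(m)=2^{O(n^{1/4})}$), but they need a separate argument that you have not given. Until (1)--(3) are filled, what you have is an accurate outline of how results of this kind are proved, not a proof.
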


\begin{proof}[Proof of Theorem~\ref{thm:upper_bound}]
Let $X$ be a set with $|X|=n$. By Theorem~\ref{thm:perm_group_bound},
there are at most $24^{(\frac{1}{6}+o(1))n^2}$ subgroups $G$ of $\Sym(X)$, and so
the theorem will follow if we can provide a sufficiently good upper bound on
the number of racks with operator group $G$, for any fixed $G$.

Let $G$ be a fixed subgroup of $\Sym(X)$. Suppose $G$ has $s$
orbits, of lengths $n_1,n_2,\ldots ,n_s$. Clearly $s\leq n$, and
$\sum_{i=1}^s n_i=n$.

Let $\alpha_1,\alpha_2,\ldots,\alpha_s$ be a complete set of
representatives for the orbits of~$G$. By
Theorem~\ref{thm:rack_general}, a rack with operator group $G$ is
determined by a sequence of elements $\pi_1,\pi_2,\ldots,\pi_s\in G$ such that
$C_G(\pi_i)\geq G_{\alpha_i}$. Since $G_{\alpha_i}$ has index
$n_i$ in $G$, each $\pi$ lies in a $G$-conjugacy class $\Pi_i$
of order at most $n_i$.

By Theorem~\ref{thm:conj_class_bound}, the group $G$ has at most
$3^{\frac{1}{2}n}$ conjugacy classes. There are at most $3^{\frac{1}{2}ns}$ choices
for the conjugacy classes $\Pi_1,\Pi_2,\ldots ,\Pi_s$, and $3^{\frac{1}{2}ns}\leq
3^{\frac{1}{2}n^2}$. Once these
conjugacy classes are fixed, there are at most $n_i$ choices for each
element $\pi_i\in G$. So the number of choices for the elements
$\pi_i$ once the classes $\Pi_i$ are chosen is at most
$\prod_{i=1}^sn_i$. The product $\prod_{i=1}^tm_i$ of
positive integers $m_i$ such that $\sum_{i=1}^tm_i=n$ is maximised
when $m_i\leq 3$ for all $i$, since $(m-2)m\geq m$ when $m\geq
4$. So $\prod_{i=1}^sn_i\leq 3^n=2^{O(n)}$.

Thus there are at most $2^{(c+o(1))n^2)}$ racks on $X$. Since every rack of
order~$n$ is isomorphic to a rack with underlying set $X$, there are at
most $2^{(c+o(1))n^2)}$ isomorphism classes of racks of order $n$, as required.
\end{proof}

\paragraph{Acknowledgement} The author would like to thank Colin
Rourke for sending a copy of Hayley Ryder's
thesis~\cite{Ryder93}, and Sam Nelson for providing references for
work on the enumation of small quandles.

\end{document}